\newtheorem{thm}{Theorem}
\newtheorem{prop}{Proposition}
\newtheorem{lem}{Lemma}
\theoremstyle{definition}
 \newtheorem{rem}{Remark}
\declaretheorem[name=Example,qed={\lower-0.3ex\hbox{\qedsymbol}}]{exa}
\newcommand\R{\mathbb{R}}
\newcommand\A{\mathscr{A}}
\renewcommand\P{\mathscr{P}}
\newcommand\N{\mathbb{N}}
\newcommand\MM{\mathbf{M}}
\newcommand\KK{\mathbf{K}}
\newcommand{\abs}[1]{\left| #1 \right| }
\newcommand{\norm}[1]{\left\| #1 \right\| }
\newcommand\dd{\mathbf{d}}
\author[P. Pasteczka]{Pawe\l{} Pasteczka}
\address{Institute of Mathematics \\ Pedagogical University of Krakow \\ Podchor\k{a}\.zych str. 2, 30-084 Krak\'ow, Poland}
\email{pawel.pasteczka@up.krakow.pl}
\subjclass[2010]{26E60, 39B12, 05C90}
\keywords{Invariant means, extended means, iterations, uniqueness, application of graph theory}
\newcommand{\operator}[1]{\mathop{\vphantom{\sum}\mathchoice
{\vcenter{\hbox{\LARGE $#1$}}}
{\vcenter{\hbox{\Large $#1$}}}{#1}{#1}}\displaylimits}
\def\Mst_#1^#2{\operator{\mathscr{M}_{\mbox{\scriptsize$\#$}}\!\!}_{#1}^{#2}\,\,}
\DeclareMathOperator{\Erg}{Erg}
\numberwithin{equation}{section}
\def\eq#1{{\rm(\ref{#1})}}
\def\Eq#1#2{\ifthenelse{\equal{#1}{*}}
  {\begin{equation*}\begin{aligned}[]#2\end{aligned}\end{equation*}}
  {\begin{equation}\begin{aligned}[]\label{#1}#2\end{aligned}\end{equation}}}
\title{Invariance property for extended means}
\begin{document}

\begin{abstract}
We study the properties of the mean-type mappings ${\bf M}\colon I^p \to I^p$ of the form
$${\bf M}(x_1,\dots,x_p):=\big(M_1(x_{\alpha_{1,1}},\dots,x_{\alpha_{1,d_1}}),\dots,M_p(x_{\alpha_{p,1}},\dots,x_{\alpha_{p,d_p}})\big),$$
where $p$ and $d_i$-s are positive integers, each $M_i$ is a $d_i$-variable mean on an interval $I \subset \mathbb{R}$, and $\alpha_{i,j}$-s are elements from $\{1,\dots,p\}$. 

We show that, under some natural assumption on $M_i$-s, the problem of existing the unique $\bf M$-invariant mean can be reduced to the ergodicity of the directed graph with vertexes $\{1,\dots,p\}$ and edges $\{(\alpha_{i,j},i) \colon i,j \text{ admissible}\}$. 
\end{abstract}

\maketitle
\section{Introduction}
Invariance property is a very important aspect in the theory of means. There are two classical studies, Lagrange \cite{Lag84} and Gauss \cite{Gau18}, which could be considered as a beginning of this field. It has been extensively studied by many authors since then. For example J.~M.~Borwein and P.~B.~Borwein \cite{BorBor87} extended some earlier ideas \cite{FosPhi84a,Leh71,Sch82} and 
generalized the original iteration to a vector of continuous, strict means of an arbitrary length.
For several recent results about Gaussian product of means, see the papers by Baj\'ak--P\'ales 
\cite{BajPal09b,BajPal09a,BajPal10,BajPal13}, by Dar\'oczy--P\'ales \cite{Dar05a,DarPal02c,DarPal03a}, 
by G{\l}azowska \cite{Gla11b,Gla11a}, by Jarczyk--Jarczyk \cite{JarJar18}, by Matkowski \cite{Mat99b,Mat02b,Mat05,Mat09e}, by Matkowski--P\'ales \cite{MatPal15}, and by the author \cite{Pas16a}.
In the vast majority of these studies, there are assumptions which provide that the invariant mean is uniquely determined. 
There are also few results where this is not the case; see  Der\k{e}gowska--Pasteczka~\cite{DerPas2005},
Matkowski--Pasteczka \cite{MatPas20a,MatPas21}, and Pasteczka \cite{Pas19a,Pas2106}. The main result of these papers is that the uniqueness of invariant means is deeply related to strictness and continuity.

\subsection*{Basic definition and notions} Before we proceed further recall that, for a given $p\in \N$ and an interval $I \subset \R$, a \emph{$p$-variable mean on $I$} is an arbitrary function $M \colon I^p \to I$ satisfying the inequality
\Eq{E:MP}{
\min(x)\le M(x)\le\max(x)\text{ for all }x \in I^p.
} 
Property \eq{E:MP} is referred as a \emph{mean property}. If the inequalities in \eq{E:MP} are strict for every nonconstant vector $x$, then we say that a mean $M$ is \emph{strict}. Moreover, for such objects, we define natural properties like continuity, symmetry (when the value of mean does not depend on the order of its arguments), monotonicity (which states that $M$ is nondecreasing in each of its variables), etc. 

A mean-type mapping is a selfmapping of $I^p$ which has a $p$-variable mean on each of its coordinates. More precisely, $\MM \colon I^p \to I^p$ is called a \emph{mean-type mapping} if $\MM=(M_1,\dots,M_p)$ for some $p$-variable means $M_1,\dots,M_p$ on $I$. In this framework a function $K \colon I^p\to \R$ is called \emph{$\MM$-invariant} if it solves the functional equation $K \circ \MM=K$. Usually we restrict solutions of this equation to the family of means and say about \emph{$\MM$-invariant means}.

\subsection*{Posing the problem} 
There is a natural problem to give a condition to $\MM$ that guarantees the uniqueness of the $\MM$-invariant mean. It turned out that it is equivalent to certain convergence of the sequence of iterations of the self-mapping $\MM$ (which hereafter will be denoted by $\MM^n$), cf. \cite{MatPas21}.   
There are three natural conditions which are proposed in the literature. Namely, if the continuous mean-type mapping $\MM=(M_1,\dots,M_p) \colon I^p\to I^p$ satisfies one of the following three conditions: \\
-- each $M_i$ is a strict mean; \\
-- $\MM$ is \emph{contractive}, that is, $\max \MM(x)-\min \MM(x) < \max(x)-\min(x)$ for every nonconstant vector $x \in I^p$; \\
-- $\MM$ is \emph{weakly contractive}, which states that for every nonconstant vector $x \in I^p$ there exists a natural number $n(x)$ such that 
\Eq{E:weC}{
\max \MM^{n(x)}(x)-\min \MM^{n(x)}(x) < \max(x)-\min(x),}
then there exists exactly one $\MM$-invariant mean (cf. \cite{BorBor87}, \cite{Mat09e}, and \cite{MatPas21}, respectively). Obviously the last condition is the most general, however it is also the most difficult to verify. 

We try to check the last condition in the example.
\begin{exa} Take a quadriple of four-variable power means defined on a set $\R_+:=(0,+\infty)$. Namely, let $\MM \colon \R_+^4 \to \R_+^4$ be a mean-type mapping  given by
\Eq{E:ex1}{
\MM(x,y,z,t):=\bigg(\frac{2xy}{x+y},\sqrt{yz},\frac{z+t}{2}, \sqrt{\frac{t^2+x^2}2}\bigg).
}
Then each coordinate of $\MM$ is a bivariate mean. Furthermore, $\MM$ is not contractive, since this condition voids for all vectors of the form $(a,a,b,b)$. On the other hand, one can prove that each coordinate of
\Eq{*}{
\MM^2(x,y,z,t)&=\Bigg(
\frac{4 \, \sqrt{y z} x y}{ {2 x y + (x+y)\sqrt{y z}}},
 \sqrt[4]{\tfrac14 y z (t + z)^2},\\
&\qquad\qquad \frac{t+z+\sqrt{2 t^{2} + 2 x^{2}}}{4},
\frac{1}{2} \, \sqrt{t^{2} + x^{2} + \frac{8 \, x^{2} y^{2}}{{\left(x + y\right)}^{2}}}\:\Bigg)
}
is a trivariate, strict mean on $I$. Thus, $\MM^2$ is a contractive mean-type mapping. Consequently \eq{E:weC} holds with $n(x):=2$, $\MM$ is weakly contractive, and there exists the unique \mbox{$\MM$-invariant} mean. 
\end{exa}

Observe that $\MM$ has a quite interesting structure. Namely, in each coordinate we take one of the classical means (harmonic, geometric, arithmetic, and quadratic), but we omit some arguments. The aim of this paper is to deliver a robust framework and to prove some natural properties for these sort of mean-type mappings.

\subsection*{Extended means} Now we introduce the essential definition from the point of view of this manuscript. Namely, a $p$-variable mean $M \colon I^p \to I$ is called an \emph{extended mean} if it satisfies a mean property \eq{E:MP} and it is independent on some variable. More precisely, there exists $k \in \{1,\dots,p\}$ such that for all $x,x' \in I^p$ satisfying the equality $x_i=x'_i$ for all $i \in\{1,\dots,p\}\setminus \{k\}$ we have $M(x)=M(x')$. 

For a given $d,p \in \N$, a sequence $\alpha:=(\alpha_1,\dots,\alpha_d) \in \{1,\dots,p\}^d$, and a $d$-variable mean $M \colon I^d\to I$ we define the mean $M^{(p;\alpha)}\colon I^p\to I$
by \Eq{E:parmean}{
M^{(p;\alpha)}(x_1,\dots,x_p):=M(x_{\alpha_1},\dots,x_{\alpha_d}) \text{ for all }(x_1,\dots,x_p)\in I^p.
}
In the case $d<p$, mean $M^{(p;\alpha)}$ is a $p$-variable extended mean on $I$.

For example if $\A\colon \R^2\to \R$ is a bivariate arithemetic mean, $p \ge 3$ and $\alpha=(2,3)$ then $\A^{(p;\alpha)}\colon I^p \to I$ is given by 
\Eq{*}{
\A^{(p;\alpha)}(x_1,\dots,x_p)=\A^{(p;2,3)}(x_1,\dots,x_p)=\tfrac{x_2+x_3}2\text{ for all }(x_1,\dots,x_p)\in I^p.
}

The opposite statement is also valid in some sense. Indeed, for every extended mean $M \colon I^p \to I$, there exist $d<p$, a sequence $\alpha \in \{1,\dots,p\}^d$, and a $d$-variable mean  $M_* \colon I^d\to I$ such that $M=M_*^{(p;\alpha)}$. We are going to study the invariance of mean-type mappings which contain extended means.

\section{Uniformly weak contractive mappings}
It turns out that in this setup it is natural to define a property which is between the contractivity and the weak contractivity. We say that a mean-type mapping 
 $\MM\colon I^p \to I^p$ is \emph{uniformly weak contractive} if there exists a natural number $n_0 \in \N$ (which does not depend on $x$) such that 
\Eq{*}{
\max \MM^{n_0}(x)-\min \MM^{n_0}(x) < \max(x)-\min(x),}
for every nonconstant vector $x \in I^p$. Obviously, every contractive mean-type mapping is uniformly weak contractive, and every uniformly weak contractive mean-type mapping is weakly contractive. Moreover, due to Matkowski-Pasteczka \cite{MatPas20a,MatPas21}, it is known that:\\
(1) for $p=2$ every  weakly contractive  mean-type mapping is uniformly weak contractive with $n_0=2$;\\
(2) for every $p>2$ there exist a weakly contractive  mean-type mapping which is not uniformly weak contractive 

\subsection{Invariance property}
In this section we show a counterpart of the result contained in \cite[Theorem~1]{Mat09e}. The main difference is that we generalize the original setting to the family of uniformly weakly contractive mean-type mappings.

\begin{thm}\label{thm:1}
Let $I \subset \R$ be an interval, $p \in \N$, and $\MM \colon I^p \to I^p$ be the uniformly weak contractive, continuous mean-type mapping. Then 
\begin{enumerate}[{\rm (i)}]
    \item \label{1.A} for every $n\in \N$, the mapping $\MM^n$ is a mean-type mapping; 
    \item \label{1.B} there is a continuous mean $K\colon I^p \to I$ such that the sequence of iterates $(\MM^n)_{n=0}^\infty$ converges, uniformly on compact subsets of $I^p$, to the mean-type mapping $\KK \colon I^p \to I^p$, $\KK=(K_1,\dots,K_p)$ such that 
    \Eq{*}{K_1=\dots=K_p=K;}
    \item \label{1.C} $\KK \colon I^p \to I^p$ is $\MM$-invariant, that is, $\KK =\KK\circ \MM$ or, equivalently, the mean $K$ is $\MM$-invariant;
    \item \label{1.D} $\MM$-invariant mean (mean-type mapping) is unique;
    \item \label{1.E} if $\MM=(M_1,\dots,M_p)$ and all $M_i$-s are strict means, then so is $K$;
    \item \label{1.F} if $\MM=(M_1,\dots,M_p)$ and all $M_i$-s are nondecreasing with respect to each variable then so is $K$;
    \item \label{1.G} if $I=(0,+\infty)$ and $\MM$ is positively homogeneous, then every iterate of $\MM$ and $K$ are positively homogeneous.
    \end{enumerate}
\end{thm}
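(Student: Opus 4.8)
The plan is to run the classical Matkowski‑type argument (as in \cite{Mat09e}), but to replace the single‑step contraction estimate by the $n_0$‑step estimate coming from uniform weak contractivity. First I would set, for $x\in I^p$,
\Eq{*}{
\varphi(x):=\max(x)-\min(x),\qquad
\Phi_n:=\varphi\circ\MM^n,}
and record the two elementary monotonicity facts that underlie everything: for any mean‑type mapping $\MM$ one has $\min(x)\le\min\MM(x)\le\max\MM(x)\le\max(x)$, hence $\Phi_{n+1}(x)\le\Phi_n(x)$ and the sequence $\bigl(\min\MM^n(x)\bigr)_n$ is nondecreasing while $\bigl(\max\MM^n(x)\bigr)_n$ is nonincreasing. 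This already gives \eqref{1.A}: a composition of mean‑type mappings is a mean‑type mapping because the mean property \eqref{E:MP} is preserved under composition (the coordinates of $\MM^n$ sit between $\min(x)$ and $\max(x)$). For \eqref{1.B}, fix a compact $C\subset I^p$; I would first pass to a compact box $[a,b]^p\supseteq C$ (legitimate since all iterates keep values between the global min and max of $x$, so $\MM^n(C)\subseteq[a,b]^p$ for all $n$). On $[a,b]^p$ the continuous function $x\mapsto\Phi_{n_0}(x)/\varphi(x)$ (extended by the value of a suitable limit, or handled on the set where $\varphi(x)\ge\varepsilon$) is strictly below $1$ by uniform weak contractivity, so it attains a maximum $q=q(a,b)<1$ there; combined with $\Phi_{n_0}(x)\le\varphi(x)$ this yields $\Phi_{kn_0}(x)\le q^k\varphi(x)$ for all $x\in[a,b]^p$ and all $k$, and then monotonicity of $\Phi_n$ in $n$ gives $\Phi_n(x)\to0$ uniformly on $C$. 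Consequently $\max\MM^n(x)$ and $\min\MM^n(x)$ converge to a common limit $K(x)$, and every coordinate $M_i\bigl(\MM^n(x)\bigr)$ is squeezed between them, so $\MM^n\to\KK=(K,\dots,K)$ uniformly on $C$; continuity of $K$ follows because a uniform‑on‑compacts limit of continuous maps is continuous, and $K$ is a mean because the mean property passes to the limit.

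Part \eqref{1.C} is then the standard fixed‑point identity: from $\MM^{n+1}=\MM^n\circ\MM$ and $\MM^n\to\KK$ uniformly on the (compact, $\MM$‑invariant‑under‑iteration) set generated by any compact $C$, together with continuity of $\KK$, I would pass to the limit to get $\KK=\KK\circ\MM$; equivalently $K\circ\MM=K$. For uniqueness \eqref{1.D}, if $L$ is any $\MM$‑invariant mean then $L=L\circ\MM^n$ for all $n$, and since $\min\MM^n(x)\to K(x)$ and $\max\MM^n(x)\to K(x)$, the mean property $\min\MM^n(x)\le L(x)=L(\MM^n(x))\le\max\MM^n(x)$ forces $L(x)=K(x)$; the mean‑type version follows coordinatewise. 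For \eqref{1.E}–\eqref{1.G} I would use that the relevant structural properties are inherited by compositions and survive the uniform limit: if every $M_i$ is nondecreasing in each variable then so is each $\MM^n$ and hence, in the limit, $K$, giving \eqref{1.F}; positive homogeneity of $\MM$ on $(0,\infty)^p$ passes to each $\MM^n$ and then to $K$, giving \eqref{1.G}. Strictness \eqref{1.E} is the only delicate one: I would argue that if $K$ were not strict there would be a nonconstant $x$ with $K(x)=\max(x)$ (or $=\min(x)$); but strictness of each $M_i$ gives, for nonconstant $y$, $M_i(y_{\alpha_{i,1}},\dots)<\max(y)$ whenever some $y_{\alpha_{i,j}}<\max(y)$, and one would need this to propagate so that $\max\MM(x)<\max(x)$ or at least $\max\MM^n(x)<\max(x)$ for some $n$ — i.e.\ that a strict mean‑type mapping is at least weakly contractive — and then combine with $K=K\circ\MM^n$ and $\min\MM^n(x)\le K(x)\le\max\MM^n(x)<\max(x)$ to reach a contradiction.

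The main obstacle I anticipate is exactly this last point, and more generally making the passage from the pointwise inequality \eqref{E:weC} (with $n(x)$ depending on $x$) — here upgraded to a uniform $n_0$ — to a genuinely uniform geometric decay rate on each compact box. One has to be careful that $\sup\{\Phi_{n_0}(x)/\varphi(x):x\in[a,b]^p,\ \varphi(x)>0\}<1$: the quotient is continuous on the (non‑compact, because we must exclude the diagonal) set $\{\varphi>0\}$, so a priori the supremum could equal $1$ as $x$ approaches a constant vector. The fix is to observe that $\Phi_{n_0}$ is itself the max‑minus‑min of a composition of means and therefore admits, near the diagonal, the crude bound $\Phi_{n_0}(x)\le\varphi(x)$ with a strict improvement that can be localized; alternatively one shows directly that $x\mapsto\Phi_{n_0}(x)$ and $x\mapsto\varphi(x)$ are both continuous and the former is $\le$ the latter with equality only possibly in the limit, and that a compactness/continuity argument on $[a,b]^p$ — splitting into the region $\varphi(x)\ge\varepsilon$ (genuine compact, quotient has a max $<1$) and $\varphi(x)<\varepsilon$ (use $\Phi_{2n_0}\le$ something like $q\cdot\varepsilon$ directly) — yields a single $q<1$ valid on the whole box. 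Once this uniform $q$ is in hand, everything else is the routine telescoping and limit‑passing sketched above.
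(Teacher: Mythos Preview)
Your overall architecture is sound, but the route you take for \eqref{1.B} is both harder than the paper's and contains a gap you do not close. The paper does \emph{not} try to manufacture a uniform geometric rate $q<1$ on a compact box. Instead it observes that, by definition of uniform weak contractivity, some fixed iterate $\MM^{n_0}$ is itself a \emph{contractive} mean-type mapping, invokes the already-known contractive case (Matkowski, \cite{Mat09e}) to get uniform-on-compacts convergence of the subsequence $(\MM^{kn_0})_{k}$ to $\KK=(K,\dots,K)$, and then upgrades to the full sequence by the monotonicity you yourself record: since $s\mapsto\min\MM^s(x)$ is nondecreasing and $s\mapsto\max\MM^s(x)$ is nonincreasing, the two-sided bound $K(x)-\varepsilon<\min\MM^{kn_0}(x)\le\max\MM^{kn_0}(x)<K(x)+\varepsilon$ automatically persists for all $s\ge kn_0$. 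This completely sidesteps the ratio $\Phi_{n_0}/\varphi$.

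Your attempt to bound that ratio below $1$ uniformly on $[a,b]^p$ is the genuine gap. The supremum really can equal $1$: take $p=2$ on $[0,1]$ with $M_1(x,y)=x+(y-x)^2$, $M_2(x,y)=y-(y-x)^2$ for $x\le y$ (symmetrized); this is contractive, yet $\Phi_1(x,y)/\varphi(x,y)=1-2(y-x)\to1$ as $y-x\to0$. Your proposed patch, splitting into $\{\varphi\ge\varepsilon\}$ and $\{\varphi<\varepsilon\}$ and claiming this ``yields a single $q<1$ valid on the whole box'', does not work as stated: on $\{\varphi<\varepsilon\}$ there is no such $q$, and ``$\Phi_{2n_0}\le q\cdot\varepsilon$'' is neither justified nor the right shape. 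A correct repair does exist (for each $\delta>0$, get a rate $q_\delta<1$ on the compact set $\{\varphi\ge\delta\}$, iterate until the orbit enters $\{\varphi<\delta\}$, then use monotonicity of $\Phi_n$; this gives uniform $\Phi_n\to0$ without any global $q$), but that is not what you wrote.

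One smaller point: your argument for \eqref{1.E} is overcomplicated and imports the $\alpha$-indexed setting from the later part of the paper, which is irrelevant here. In Theorem~\ref{thm:1} each $M_i$ is a genuine $p$-variable strict mean, so for nonconstant $x$ one has $M_i(x)<\max(x)$ for every $i$, hence $\max\MM(x)<\max(x)$ in one step. Then $K(x)=K(\MM(x))\le\max\MM(x)<\max(x)$, and similarly for $\min$; no weak-contractivity detour is needed.
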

\begin{proof}
The case when $\MM$ is contractive is due to \cite[Theorem~1]{Mat09e}. Moreover parts \eq{1.A}, and \eq{1.D} are due to \cite[Theorem~2]{MatPas21}, where they were proved for all continuous, weakly contractive mean-type mappings.

Now assume that $\MM\colon I^p \to I^p$ is uniformly weak contractive, and take $k \in \N$ such that $\MM^k$ is contractive.

To show (ii) let $\Lambda$ be a family of all compact subintrevals of $I$. Observe that for every compact subset $X$ of $I^p$, there exists $J \in \Lambda$ such that $X \subset J^p$. Moreover 
\Eq{*}{
I^p=\bigcup_{J \in \Lambda} J^p,}
and $\MM(J^p)\subset J^p$ for every $J \in \Lambda$.
Therefore, it is sufficient to show that the assertion 
\Eq{AAA00}{
\MM^n|_{J^p}\text{ is uniformly convergent to }\KK|_{J^p}\\
\text{and }\KK |_{J^p}=(K,\dots,K)|_{J^p}
}
holds for all $J \in \Lambda$. Consequently, one may assume that $I$ is compact. 

Then, by the contractive part, the assertion \eqref{1.B} holds for the subsequence $(\MM^{kn})_{n=0}^\infty$. More precisely, there exists $\KK \colon I^p \to I^p$ with required properties such that for every $\varepsilon>0$ there exists $n_\varepsilon$ such that \Eq{*}{
\norm{\MM^{kn}(x)-\KK(x)}_\infty<\varepsilon\text{ for all }x \in I^p \text{ and }n\ge n_\varepsilon,
} 
where $\norm{(v_1,\dots,v_p)}_\infty:=\max\{\abs{v_1},\dots,\abs{v_p}\}$.
Equivalently, the property
\Eq{P-ii}{
K(x)-\varepsilon < \min \MM^{s}(x) \le \max \MM^{s}(x) < K(x)+\varepsilon 
}
holds for $s=kn_\varepsilon$. But by the mean-value property \eq{E:MP} we know that 
$M_i(y) \ge \min(y)$ for all $i \in \{1,\dots,p\}$ and $y \in I^p$. Thus 
\Eq{*}{
\min \MM(y)=\min(M_1(y),\dots,M_p(y)) \ge \min(y) \text{ for all }y \in I^p.
}
Thus, by simple induction, the mapping $s \mapsto  \min \MM^{s}(x)$ is nondecreasing. Analogously, we can show that the mapping $s \mapsto  \max \MM^{s}(x)$ is nonincreasing. Whence, by the trivial inequality $\min \MM^{s}(x)\le \max \MM^{s}(x)$, we obtain that \eq{P-ii} holds for all $s\ge kn_\varepsilon$ which completes the proof of~\eq{1.B}. 

Having this proved, for all $x\in I^p$, we obtain
\Eq{*}{
\KK (\MM(x))=\lim_{n \to \infty} \MM^n (\MM(x))=\lim_{n \to \infty} \MM^n (x)=\KK(x),
}
and therefore $\KK\circ\MM=\KK$, which is~\eq{1.C}.


To prove \eq{1.E} note that, under the assumption that all $M_i$-s are strict means, for an arbitrary nonconstant vector $x \in I^p$ we have 
\Eq{*}{
K(x)=K\circ \MM(x) \le \max \big(\MM(x)\big)<\max x.
}
Similarly we can show the inequality $K(x)>\min(x)$.

Now we proceed to the proof of \eq{1.F}. First, let us define the partial ordering $\preceq$ on $I^p$ by
\Eq{*}{
(x_1,\dots,x_p) \preceq (y_1,\dots,y_p)\text{ if and only if }x_i\le y_i\text{ for all }i \in \{1,\dots p\}.
}
Then, since each $M_i$ are nondecreasing, for every $x,y\in I^p$ with $x \preceq y$ we get $\MM(x)\preceq \MM(y)$. Thus, by a simple induction, we also have $\MM^n(x)\preceq \MM^n(y)$ for all $n \in \N$. In the limit case when $n \to \infty$, in view of \eq{1.B}, we obtain that $\KK$ is monotone with respect to $\preceq$, which is \eq{1.F}.

To show the last assertion, take $c>0$ and $x \in (0,+\infty)^p$ arbitrarily. Then, since $\MM$ is positively homogeneous, by \eq{1.B}, we have
\Eq{*}{
\KK(c x)=\lim_{n \to \infty} \MM^n(c x)= \lim_{n \to \infty} c \MM^n(x)=c \KK(x),
}
and thus $K(c x)=c K(x)$.
\end{proof}

\section{$\dd$-averaging mappings}

In this section, we introduce an important subfamily of mean-type mappings and study their properties within this class. Our aim is to reduce the properties of a subclass of mean-type mappings which appeared in the previous section to certain properties of directed graphs.

For the sake of completeness, let us introduce formally $\N:=\{1,\dots\}$, and $\N_p:=\{1,\dots,p\}$ (where $p\in \N$). Then, for $p \in \N$ and a vector $\dd=(d_1,\dots,d_p)\in \N^p$, let $\N_p^\mathbf{d}:=\N_p^{d_1}\times\dots\times \N_p^{d_p}$.
Using this notations, a sequence of means $\MM=(M_1,\dots,M_p)$ is called \emph{$\dd$-averaging} mapping on~$I$ if each $M_i$ is a $d_i$-variable mean on $I$. 

For a $\dd$-averaging  mapping $\MM$ and a vector of indexes $\alpha =(\alpha_1,\dots,\alpha_p)\in \N_p^{d_1}\times\dots\times \N_p^{d_p}=\N_p^\dd$ define a mean-type mapping $\MM_\alpha \colon I^p \to I^p$ by 
\Eq{*}{
\MM_\alpha:=\Big(M_1^{(p;\alpha_1)},\dots,M_p^{(p;\alpha_p)}\Big);
}
recall that $M_i^{(p,\alpha_i)}$-s were defined in \eq{E:parmean}. In the more explicit form we have
\Eq{Explicit}{
[\MM_\alpha](x_1,\dots,x_p)&=\Big(M_i^{(p,\alpha_i)}(x_1\dots,x_p)\Big)_{i=1}^p\\
&=\Big(M_i\big(x_{\alpha_{i,1}},\dots,x_{\alpha_{i,d_i}}\big)\Big)_{i=1}^p\\
&=\Big(M_1\big(x_{\alpha_{1,1}},\dots,x_{\alpha_{1,d_1}}\big),\dots,M_p\big(x_{\alpha_{p,1}},\dots,x_{\alpha_{p,d_p}}\big)\Big).
}

Few examples of this sort of mean-types mapping are presented in the last section. 
We aim to study the family of means which are \mbox{$\MM_\alpha$-invariant}. The important part of our consideration will use some facts from graph theory. 

\begin{rem}
Observe that for each element on $\alpha\in\N_p^\dd$ we can recover the value of $p$ (based on the vector $\dd$)\, but we cannot do the same for the single element of $\alpha_i \in \N_p^{d_i}$ (in this example $i \in \{1,\dots,p\}$). Therefore, it is natural to use notations $\MM_\alpha$ and $M_i^{(p;\alpha_i)}$. 
\end{rem}

\subsection{General properties of directed graphs}
Now we recall some elementary facts concerning graphs. For details, we refer the reader to the classical book \cite{GraKnuPat89}.

A \emph{digraph} is a pair $G=(V,E)$, where $V$ is a finite set of vertexes, and $E\subset V \times V$ is a set of edges. For each $v \in V$ we denote by $N_G^-(v)$ and $N_G^+(v)$ sets of \emph{in-neighbors} and \emph{out-neighbors}, respectively. More precisely $N_G^-(v)=\{w \in V \colon (w,v)\in E\}$ and $N_G^+(v)=\{w \in V \colon (v,w)\in E\}$.  The edges of the form $(v,v)$ for $v \in V$ are called \emph{loops}.

A sequence $(v_0,\dots,v_n)$ of elements in $V$ such that $(v_{i-1},v_{i})\in E$ for all $i \in \{1,\dots,n\}$ is called a \emph{walk} from $v_0$ to $v_n$. The number $n$ is a \emph{length} of the walk. If for all $v, w \in V$ there exists a walk from $v$ to $w$, then $G$ is called \emph{irreducible}.

A \emph{cycle} in a graph is a non-empty walk in which only the first and last vertices are equal. A directed graph is said to be \emph{aperiodic} if there is no integer $k > 1$ that divides the length of every cycle of the graph. A~graph which is simultaneously irreducible and aperiodic is called \emph{ergodic}. We also need two lemmas which will be useful in the remaining part of this paper
\begin{lem} \label{new}
Let $G=(V,E)$ be an ergodic digraph. Then there exists $q_0$ such that for all $q \ge q_0$, and $v,w \in V$ there exists a walk from $v$ to $w$ of length exactly $q$.
\end{lem}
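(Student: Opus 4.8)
The plan is to combine irreducibility (which gives existence of walks of *some* length) with aperiodicity (which controls *which* lengths are achievable on closed walks) via a numerical-semigroup argument. First I would fix a vertex $v_0 \in V$ and consider the set $S_{v_0} := \{\ell \in \N : \text{there is a closed walk from } v_0 \text{ to } v_0 \text{ of length } \ell\}$. Concatenating closed walks shows $S_{v_0}$ is closed under addition, so it is a numerical semigroup. Aperiodicity of $G$ means $\gcd$ of the lengths of *all* cycles is $1$; I would first check that this forces $\gcd S_{v_0} = 1$ as well (every closed walk decomposes into cycles, so lengths of cycles and lengths of closed walks generate the same subgroup of $\Z$, and irreducibility guarantees that cycles through different vertices can be ``transported'' to closed walks at $v_0$ by prepending and appending a walk $v_0 \to v \to v_0$). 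By the classical Chicken McNugget / Schur theorem, a numerical semigroup with $\gcd 1$ contains every sufficiently large integer; hence there is $N_{v_0}$ with $\{N_{v_0}, N_{v_0}+1, \dots\} \subseteq S_{v_0}$.

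Next I would upgrade this from closed walks at a single vertex to walks between arbitrary pairs. Given $v, w \in V$, irreducibility supplies a walk from $v$ to $v_0$ of some length $a_{v}$ and a walk from $v_0$ to $w$ of some length $b_{w}$. Then for every $m \in S_{v_0}$ we obtain a walk from $v$ to $w$ of length $a_v + m + b_w$ by concatenation. Since $S_{v_0}$ contains all integers $\ge N_{v_0}$, the set of achievable lengths of $v \to w$ walks contains all integers $\ge a_v + N_{v_0} + b_w$. Finally, set
\Eq{*}{
q_0 := N_{v_0} + \max_{v \in V} a_v + \max_{w \in V} b_w,
}
which is finite because $V$ is finite; then for every $q \ge q_0$ and every pair $v, w$ there is a $v \to w$ walk of length exactly $q$, as claimed.

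The main obstacle is the bookkeeping in the first paragraph: one must be careful that ``aperiodic'' is stated in terms of cycle lengths, whereas the semigroup argument naturally concerns closed-walk lengths at a fixed base vertex, and these need to be reconciled. The cleanest route is to observe that any closed walk is an edge-disjoint (in the multiset sense) union of cycles, so the set of closed-walk lengths at $v_0$ lies in the semigroup generated by cycle lengths (after transporting each cycle to $v_0$ using irreducibility, which changes each generator by a fixed per-vertex additive constant but does not affect the $\gcd$ of the generated group once we also include the transport loops themselves). Everything else — closure under concatenation, the Schur/Frobenius statement, and the final finiteness of $q_0$ — is routine.
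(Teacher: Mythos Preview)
Your proof is correct and rests on the same engine as the paper's: the Frobenius/Schur fact that a numerical semigroup with $\gcd$ equal to $1$ contains all sufficiently large integers. The organization differs. You route everything through a fixed hub vertex $v_0$, build the semigroup $S_{v_0}$ of closed-walk lengths there, and then sandwich $v \to v_0 \to \cdots \to v_0 \to w$. The paper instead picks, for each ordered pair $(v,w)$, a walk $A_{vw}$ from $v$ to $w$ that passes through \emph{every} vertex of $G$ (possible by irreducibility); such a walk necessarily meets every cycle $C_i$, so one can splice in any nonnegative combination of cycle detours directly into $A_{vw}$, yielding all lengths of the form $l_{v,w}+\sum_i m_i\alpha_i$, and then sets $q_0=\max_{v,w} l_{v,w}+n_0$.

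The paper's organization sidesteps exactly what you flag as the ``main obstacle'': because it works directly with cycle lengths $m_1,\dots,m_k$ (for which $\gcd=1$ is the very definition of aperiodicity) rather than with closed-walk lengths at a base vertex, there is no separate verification that $\gcd S_{v_0}=1$. Your hub argument is the standard Markov-chain-textbook route; the reduction of $\gcd S_{v_0}=1$ to aperiodicity via the difference $(a+m+b)-(a+b)=m$ is sound, though your final paragraph states it less crisply than it could. Either packaging yields the lemma with the same amount of work.
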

\begin{proof}
First, since $G$ is ergodic, there exist $k>1$ and cycles $(C_i)_{i=1}^k$ in $G$ of length $(m_i)_{i=1}^k$, respectively, such that $\gcd(m_1,\dots,m_k)=1$. Then there exists $n_0\in \N$, such that every number $n \ge n_0$ can be expressed as 
\Eq{*}{
n=m_1\alpha_1+\dots+m_k\alpha_k
}
for some nonnegative integers $\alpha_1,\dots,\alpha_k$ (see for example \cite{Bra42}). Next, for $v,w \in V$, denote 
\Eq{*}{
P_{v,w}:=\bigg\{s \in \N \colon 
\begin{matrix} 
\text{ there exists a walk from }v\text{ to }w\text{ of length }s\\
\text{ which contains every vertex in }G
\end{matrix} \bigg\}
}

Since $G$ is irreducible, for all $v,w \in V$ there exists a walk $A_{vw}$ from $v$ to $w$, which contains all vertices in $G$ (vertices may appear many times) -- we denote its length by $l_{v,w}$. Then $A_{vw}$ has a nonempty intersection with all cycles $(C_i)_{i=1}^k$ (since it contains all vertexes). Consequently, we may extend $A_{vw}$ by "taking a detour" thought any cycle $C_i$, also many times. Thus
\Eq{*}{
\big\{l_{v,w}+m_1\alpha_1+\dots+m_k\alpha_k \ \colon\  \alpha_1,\dots,\alpha_k \ge 0\big\} \subseteq  P_{v,w}.
}
Therefore $q \in P_{v,w}$ for all $q \ge l_{v,w}+n_0$. Consequently $q \in \bigcap\limits_{v,w \in V} P_{v,w}$ for every $q \ge \max\big\{l_{v,w}\colon v,w\in V\big\}+n_0:=q_0$, which completes the proof.
\end{proof}

\begin{lem} \label{lem:1}
For an ergodic digraph $G=(V,E)$ define $T_G \colon \{-1,0,1\}^V \to \{-1,0,1\}^V$ as follows: for an arbitrary $c \colon V \to \{-1,0,1\}$ and $v \in V$ we set
\Eq{E:TG}{
T_G(c)(v):=\begin{cases} 
1 & \text{ if }c(w)=1 \text{ for all }w \in N_G^-(v);\\
-1 & \text{ if }c(w)=-1 \text{ for all }w \in N_G^-(v);\\
0 & \text{ otherwise}.
\end{cases}
}

Then for every function $c_0 \colon V \to \{-1,0,1\}$ there exists a number $\bar c \in \{-1,0,1\}$ such that
$T_G^{n}(c_0) \equiv \bar c$ for all $n \ge 3^{|V|}$. 

Moreover $\bar c=0$ unless $c_0 \equiv 1$ or $c_0 \equiv -1$.
\end{lem}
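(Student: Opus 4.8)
\textbf{Proof plan for Lemma~\ref{lem:1}.}
The plan is to analyze the dynamics of $T_G$ by tracking, for each iterate, the sets of vertices that are ``frozen'' at $1$ and at $-1$. For a function $c\colon V\to\{-1,0,1\}$ put $P(c):=c^{-1}(1)$ and $N(c):=c^{-1}(-1)$. The first observation to record is a \emph{monotonicity} property: if $c$ has $P(c)=V$ then $T_G(c)\equiv 1$ (every in-neighborhood is a subset of $V$, hence entirely labelled $1$), and similarly $P(c)=\emptyset$ forces $P(T_G(c))=\emptyset$ unless some vertex has \emph{no} in-neighbors --- but in an irreducible digraph with $|V|\ge 1$ every vertex lies on a cycle, so $N_G^-(v)\ne\emptyset$ for all $v$; thus if $c$ is not identically $1$ then $T_G(c)$ is not identically $1$ either, and dually for $-1$. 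This already gives the ``moreover'' clause once we know $T_G^n(c_0)$ stabilizes: the only constant fixed points in the range of the stabilized sequence are $1$, $-1$, and $0$, and $\bar c=\pm1$ can only occur if the whole orbit, in particular $c_0$, equals $\pm1$.

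Next I would prove the stabilization itself. The key structural claim is that the set $P_n:=P(T_G^n(c_0))$ satisfies $v\in P_{n+1}\iff N_G^-(v)\subseteq P_n$, and likewise $N_n:=N(T_G^n(c_0))$ satisfies $v\in N_{n+1}\iff N_G^-(v)\subseteq N_n$. Crucially, since a vertex can never be simultaneously forced to $1$ and to $-1$, the sets $P_n$ and $N_n$ evolve \emph{independently} of each other --- the recursion for $P_{n+1}$ depends only on $P_n$, not on $N_n$. So it suffices to understand the single set-valued map $S\mapsto \Phi(S):=\{v\in V: N_G^-(v)\subseteq S\}$ on subsets of $V$, iterated from $S_0:=P(c_0)$, and separately from $S_0:=N(c_0)$. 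I claim $\Phi$ eventually lands on a fixed point, and that the only fixed points are $\emptyset$ and $V$. Indeed, if $\emptyset\subsetneq S\subsetneq V$ were a fixed point of $\Phi$, then by irreducibility pick $v\in V\setminus S$ and $w\in S$ with a walk $w=v_0,v_1,\dots,v_m=v$; since $v_m=v\notin S=\Phi(S)$ there is $v_{m-1}'\in N_G^-(v_m)\setminus S$, and iterating this ``pull-back'' along in-neighbors produces, after finitely many steps, a contradiction with $w\in S$ --- more cleanly: $\Phi(S)=S$ means that whether $v\in S$ depends only on whether all in-neighbors are in $S$, so $S$ is both ``closed under taking all in-neighbors'' in a way that makes $V\setminus S$ reachable-forward from nowhere in $S$; irreducibility forbids a proper nonempty such $S$. (I would write this out as: $S=\Phi(S)$ implies no edge enters $S$ from $V\setminus S$ \emph{and} every vertex all of whose in-neighbors lie in $S$ is in $S$; a short case analysis with a shortest walk from $S$ to its complement closes it.)

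Then I would bound the number of steps to reach a fixed point. The orbit $S_0,S_1=\Phi(S_0),S_2,\dots$ need not be monotone in general, but once two consecutive terms agree the sequence is constant, and I would argue the orbit stabilizes within $|V|$ steps using the following refinement: consider instead the nested sequences $A_0\subseteq A_1\subseteq\cdots$ and $B_0\supseteq B_1\supseteq\cdots$ defined by $A_0:=\emptyset$, $B_0:=V$, $A_{j+1}:=A_j\cup\{v:N_G^-(v)\subseteq A_j\}$, $B_{j+1}:=\{v:N_G^-(v)\subseteq B_j\}$ (the latter is decreasing since $\Phi$ is monotone and $B_1\subseteq B_0$). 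Each strictly changes at least one vertex until it stabilizes, so both stabilize within $|V|$ steps, necessarily at $\emptyset$ and $V$ respectively by the fixed-point analysis; and for the actual orbit from $S_0$ I would sandwich it, using monotonicity of $\Phi$, between suitable shifts of $A_j$ and $B_j$ to conclude $S_n$ is constant for $n\ge |V|$. Applying this to both $P(c_0)$ and $N(c_0)$: for $n\ge|V|$ the sets $P_n$ and $N_n$ are each $\emptyset$ or $V$; if $P_n=V$ then (since $N_n$ is disjoint) $N_n=\emptyset$ and $T_G^n(c_0)\equiv 1$, forcing $c_0\equiv 1$ by the monotonicity remark; symmetrically for $-1$; in all remaining cases $P_n=N_n=\emptyset$, i.e. $T_G^n(c_0)\equiv 0$. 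Taking $n\ge|V|$ suffices, and in particular $n\ge 3^{|V|}\ge|V|$ works, giving the stated bound.

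The main obstacle I anticipate is the second paragraph: pinning down that $\Phi$ has no proper nonempty fixed set, and more delicately that the (non-monotone) orbit from an arbitrary starting set actually settles within $|V|$ steps rather than merely eventually. The cleanest route is the sandwiching argument with the monotone auxiliary sequences $A_j$ and $B_j$, so I would spend the bulk of the write-up making that precise; everything else (the monotonicity remark, the ``moreover'' clause, and the disjointness of $P_n$ and $N_n$) is short and essentially bookkeeping.
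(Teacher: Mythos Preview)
Your decomposition into the independently evolving sets $P_n=c_n^{-1}(1)$ and $N_n=c_n^{-1}(-1)$, each governed by the monotone map $\Phi(S)=\{v:N_G^-(v)\subseteq S\}$, is correct and is a genuinely different organization than the paper's (which works with the full sequence $c_n$, uses pigeonhole on $\{-1,0,1\}^V$ to get eventual periodicity, and then traces backwards from a surviving value $\pm1$ via Lemma~\ref{new}). Your fixed-point analysis of $\Phi$ using only irreducibility is also fine: if $\Phi(S)=S$ then no edge goes from $V\setminus S$ into $S$, so $S\in\{\emptyset,V\}$.

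The gap is in the stabilization bound. Your auxiliary sequences collapse immediately: since every vertex of an ergodic digraph has an in-neighbor, $\Phi(\emptyset)=\emptyset$, so $A_j\equiv\emptyset$; and $\Phi(V)=V$, so $B_j\equiv V$. The sandwich $A_n\subseteq S_n\subseteq B_n$ is therefore vacuous and cannot force $S_n$ to stabilize. In fact the claimed bound $|V|$ is false: take $V=\{1,2,3\}$ with edges $(1,1),(1,2),(2,3),(3,1)$ (ergodic) and $S_0=\{1,3\}$; then $S_1=\{1,2\}$, $S_2=\{2,3\}$, $S_3=\{3\}$, $S_4=\emptyset$, so stabilization takes $4>|V|$ steps. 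More seriously, your argument never uses aperiodicity, yet it is essential: on the irreducible but periodic $2$-cycle, $\Phi(\{1\})=\{2\}$ and $\Phi(\{2\})=\{1\}$, so $\Phi$ has a nontrivial periodic orbit and your scheme cannot conclude.

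There are two clean repairs, both of which bring you back close to the paper's mechanism. Either observe that $v\in\Phi^n(S_0)$ iff every walk of length $n$ ending at $v$ starts in $S_0$, and invoke Lemma~\ref{new}: for $n\ge q_0$ every vertex reaches $v$ in exactly $n$ steps, so $\Phi^n(S_0)\in\{\emptyset,V\}$; since the $\Phi$-orbit lives in a set of size $2^{|V|}\le 3^{|V|}$, pigeonhole plus this periodic-point analysis gives stabilization by step $3^{|V|}$. Or simply run pigeonhole on $2^V$ to enter a $\Phi$-cycle by step $2^{|V|}$, and use the walk characterization (hence aperiodicity) to show that cycle must be a fixed point $\emptyset$ or $V$. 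Either way, aperiodicity enters through exactly the place Lemma~\ref{new} covers; it cannot be avoided.
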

\begin{proof}
For the sake of brevity, for all $n\in\N$, we denote briefly $c_n:=T_G^n(c_0)$. First observe that if $c_{n_0}$ is constant for some $n_0 \in \N$ then, since constant functions are fixed points of $T_G$, we obtain $c_n = c_{n_0}$ for all $n \ge n_0$. Thus, in order to show the main part of the statement, it is sufficient to show that $c_{n_0}$ is constant for some $n_0 \in \{0,\dots,3^{|V|}\}$. 

By $c_{n+1}=T_G(c_n)$, since the range of $T$ has $3^{|V|}$ elements, we know that there exist $\alpha,n_0 \in \{1,\dots,3^{|V|}\}$ such that 
\Eq{E:314}{
c_{n+\alpha}=c_n\text{ for all }n\ge n_0.
}

Next, we show that $c_{n_0}$ is a constant function. First, since $c_{n_0}(V)$ is a nonempty subset of $\{-1,0,1\}$, we know that at least one of the following conditions is valid: \Eq{*}{
{\rm\sc A.}\ c_{n_0}(V)=\{0\}, \quad\text{\rm\sc B. }1\in c_{n_0}(V),  \quad\text{\rm\sc C. }{-1}\in c_{n_0}(V). 
}
It splits our proof to three (possibly overlapping) cases.
\vskip1mm
\noindent{\sc Case A.} Once we have $c_{n_0}(V)=\{0\}$, that is $c_{n_0}\equiv 0$, then $T_G^n(c_0)=T^{n-n_0}(c_{n_0})\equiv 0$ for all $n \ge n_0$ and we are done.

\vskip1mm
\noindent{\sc Case B.} If $1 \in c_{n_0}(V)$ then take $v_0\in V$ such that $c_{n_0}(v_0)=1$. By \eq{E:314} we have 
\Eq{E:329}{
c_{n_0+k\alpha}(v_0)=1\text{ for all }k \in \N.
}

Next, define a sequence $(V_n)_{n=0}^\infty$ of sets of vertexes (that is subsets of $V$) by $V_0:=\{v_0\}$, and
\Eq{*}{
V_p:=\big\{v \in V \colon \text{there exists a walk from }v\text{ to }v_0\text{ of length }p\big\},\ p\ge 1.
}
Observe that $w \in V_{p+1}$ if and only if there exists an edge from $w$ to some vertex in $V_p$, that is $V_{p+1}=N_G^-(V_p)$ ($p\ge 0$). Thus $c_n(v)=1$ for all $v \in V_p$ implies $c_{n-1}(v)=1$ for all $v \in V_{p+1}$.
By a simple induction, in view of \eq{E:329}, one gets 
\Eq{E:333}{
c_{0}(v)=1\text{ for all }k \in \N\text{ and }v \in V_{n_0+k\alpha}.
}

By Lemma~\ref{new}, since $G$ is ergodic, there exists $q_0$ such that $V_q=V$ for all $q\ge q_0$. Take $k_0$ such that $n_0+k_0\alpha \ge q_0$. Then, by \eq{E:333}, we have $c_0(v)=1$ for all $v\in V$.

\vskip1mm
\noindent{\sc Case C.} Whenever $-1 \in c_{n_0}(V)$ then, analogously to the previous case, one gets $c_0\equiv-1$.

\medskip
Binding all the latter cases, we have proved that $c_{n_0}$ is a constant function, that is, $c_{n_0} \equiv \bar c$ for some $\bar c \in \{-1,0,1\}$. Since $n_0 \le 3^{|V|}$, and $T_G(c_{n_0})=c_{n_0}$, we obtain 
\Eq{*}{
T_G^n(c_0)=T_G^{n-n_0}\big(T_G^{n_0}(c_0)\big)=T_G^{n-n_0}(c_{n_0})=c_{n_0}\equiv \bar c \text{ for all }n\ge 3^{|V|}.
}
Moreover if $\bar c \ne 0$ then we are not in case A, and therefore $c_0 \equiv 1$ or $c_0 \equiv -1$ as it has been proved in cases B and C, respectively.
\end{proof}

After this extensive introduction, for a given $p\in\N$, $\dd=(d_1,\dots,d_p)\in \N^p$, and $\alpha \in \N_p^\dd$, we define the \emph{$\alpha$-incidence} graph $G_\alpha=(V_\alpha,E_\alpha)$ as follows: $V_\alpha:=\N_p$ and $E_\alpha:=\{(\alpha_{i,j},i) \colon i \in \N_p \text{ and }j \in \N_{d_i}\}$. 

In view of \eq{Explicit} we get that $x_k$ appears in $[\MM_\alpha]_i$ as an argument if and only if $G_\alpha$ contains the edge from $k$ to $i$. We are going to prove that the natural assumption to warranty that \mbox{$\MM_\alpha$-invariant} mean is uniquely determined, is that $G_\alpha$ is ergodic. Therefore, for $p \in \N$ and $\dd \in \N^p$, we set 
\Eq{*}{
\Erg(\dd):=\{\alpha \in \N_p^\dd \colon G_\alpha \text{ is ergodic}\}.
}

\subsection{Graphs of averaging mappings}
Now we show the first nontrivial result referring directly to $\dd$-averaging mappings.

\begin{prop}\label{prop:1}
Let $I \subset \R$ be an interval, $p \in \N$, $\dd \in \N^p$, $\alpha \in \Erg(\dd)$, and $\MM=(M_1,\dots,M_p)$ be a $\dd$-averaging mapping. If all $M_i$-s are strict means, then $\MM_\alpha$ is uniformly weak contractive.

Moreover, for all $x \in I^p$, either $\MM_\alpha^{3^p}(x)$ is a constant vector or 
\Eq{prop:1E}{
\min(x)<\min \MM_\alpha^{3^p}(x)<\max \MM_\alpha^{3^p}(x)<\max x.
}
\end{prop}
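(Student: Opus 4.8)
The plan is to track, for a fixed nonconstant vector $x \in I^p$, the pattern of coordinates that equal $\max(x)$, equal $\min(x)$, or lie strictly in between, and to show that applying $\MM_\alpha$ moves this pattern exactly as the operator $T_{G_\alpha}$ from Lemma~\ref{lem:1} moves a coloring of $V_\alpha$. Concretely, fix nonconstant $x$, write $m:=\min(x)$ and $M:=\max(x)$, and define $c_0 \colon \N_p \to \{-1,0,1\}$ by $c_0(k)=1$ if $x_k=M$, $c_0(k)=-1$ if $x_k=m$, and $c_0(k)=0$ otherwise. Since each $M_i$ is a strict mean, for $y \in I^p$ with all coordinates in $[m,M]$ one has $M_i\big(y_{\alpha_{i,1}},\dots,y_{\alpha_{i,d_i}}\big)=M$ exactly when all the arguments $y_{\alpha_{i,j}}$ equal $M$, i.e.\ when $c(k)=1$ for every in-neighbor $k$ of $i$ in $G_\alpha$; likewise for $m$ and $-1$; otherwise the value lies strictly in $(m,M)$. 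This is precisely the definition \eqref{E:TG}, so the color vector of $\MM_\alpha(x)$ is $T_{G_\alpha}(c_0)$, and more generally the color vector of $\MM_\alpha^n(x)$ is $T_{G_\alpha}^n(c_0)$ — provided at each stage the global min and max have not strictly shrunk.

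Here I must be slightly careful: the coloring argument is stated relative to the ambient values $m$ and $M$ of the original $x$, and the monotonicity of $s \mapsto \min\MM_\alpha^s(x)$ and $s \mapsto \max\MM_\alpha^s(x)$ (which follows from the mean property, exactly as in the proof of Theorem~\ref{thm:1}\eqref{1.B}) guarantees all iterates keep their coordinates in $[m,M]$. So the bookkeeping is consistent: as long as $T_{G_\alpha}^n(c_0)$ still takes the value $1$ somewhere, $\max\MM_\alpha^n(x)$ could still be $M$; as long as it takes $-1$ somewhere, $\min\MM_\alpha^n(x)$ could still be $m$. Now invoke Lemma~\ref{lem:1} with $G=G_\alpha$ (ergodic by hypothesis, $\alpha\in\Erg(\dd)$) and $|V|=p$: since $x$ is nonconstant, $c_0$ is neither $\equiv 1$ nor $\equiv -1$, hence $T_{G_\alpha}^{n}(c_0)\equiv 0$ for all $n \ge 3^{p}$. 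In particular $T_{G_\alpha}^{3^p}(c_0)\equiv 0$, which says no coordinate of $\MM_\alpha^{3^p}(x)$ equals $M$ and none equals $m$; combined with the mean property $m \le \min\MM_\alpha^{3^p}(x)$ and $\max\MM_\alpha^{3^p}(x)\le M$, we get either $\MM_\alpha^{3^p}(x)$ is a constant vector (if its min and max coincide at some interior value) or the strict chain \eqref{prop:1E} holds. That establishes the ``Moreover'' part.

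The uniform weak contractivity then follows immediately with $n_0:=3^p$: for every nonconstant $x$, by \eqref{prop:1E} (or the constant-vector alternative, in which case the left side is $0$ and the right side is positive),
\Eq{*}{
\max\MM_\alpha^{3^p}(x)-\min\MM_\alpha^{3^p}(x) < \max(x)-\min(x),
}
which is exactly the defining inequality of a uniformly weak contractive mapping with the single exponent $n_0=3^p$ independent of $x$.

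The main obstacle, and the place to write carefully, is the exact transfer lemma: that the $\{-1,0,1\}$-coloring of $\MM_\alpha^n(x)$ equals $T_{G_\alpha}^n$ applied to the coloring of $x$. The subtlety is twofold — first, one needs strictness of the $M_i$ to be sure that an interior argument (color $0$ or a mix of $1$ and $-1$) forces a strictly interior value, and that \emph{all} arguments hitting $M$ is not merely sufficient but necessary for the output to be $M$ (this is where $\min \le M_i \le \max$ with equality only for constant input is used); second, one needs the in-neighbor set $N_{G_\alpha}^-(i)=\{\alpha_{i,1},\dots,\alpha_{i,d_i}\}$ to match the list of variables actually fed to $M_i$ in \eqref{Explicit}, which is exactly the construction of $G_\alpha$. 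Once this correspondence is nailed down for one step, iterating it is a trivial induction, and the rest is a direct appeal to Lemma~\ref{lem:1} and the mean property.
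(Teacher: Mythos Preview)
Your proposal is correct and follows essentially the same approach as the paper: encode the pattern of extremal coordinates of $\MM_\alpha^n(x)$ (relative to the fixed $m=\min(x)$, $M=\max(x)$) as a $\{-1,0,1\}$-coloring, verify via strictness that one step of $\MM_\alpha$ corresponds exactly to one application of $T_{G_\alpha}$, and then invoke Lemma~\ref{lem:1}. The only minor difference is that you use the ``moreover'' clause of Lemma~\ref{lem:1} directly to force $\bar c=0$, whereas the paper keeps the (vacuous) cases $\bar c=\pm 1$ in its case split; your version is slightly more economical but otherwise identical.
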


\begin{proof}
Let $\Gamma$ be a family of all nonconstant vectors in $I^p$. For $n \in \{0,1,\dots\}$ and $x=(x_1,\dots,x_p)\in \Gamma$ define $c_{x,n} \colon \N_p \to \{-1,0,1\}$ by
\Eq{*}{
c_{x,0}(i)&:=\begin{cases}
1 & \text{ if }x_i=\max(x);\\
-1 & \text{ if }x_i=\min(x);\\
0 & \text{ otherwise};
\end{cases}\\
c_{x,n}(i)&:=\begin{cases}
1 & \text{ if }[\MM_\alpha^n(x)]_i=\max(x);\\
-1 & \text{ if }[\MM_\alpha^n(x)]_i=\min(x);\\
0 & \text{ otherwise}
\end{cases} &\text{ for }n\ge 1.
}
 Then $c_{x,n}(i)=1$ for some $x \in \Gamma$ and $n \ge 1$ yields $[\MM_\alpha^n(x)]_i=\max(x)$, thus 
$$M_i\big([\MM_\alpha^{n-1}(x)]_{\alpha_{i,1}},\dots,[\MM_\alpha^{n-1}(x)]_{\alpha_{i,d_i}}\big)=\max(x).$$ Since $M_i$ is a strict mean, one has 
\Eq{*}{
[\MM_\alpha^{n-1}(x)]_{\alpha_{i,j}}=\max(x)\text{ for all }j \in \N_{d_i}.
}
In the other words, $c_{x,n-1}(\alpha_{i,j})=1$ for all $j \in \N_{d_i}$. Therefore, if $c_{x,n}(i)=1$ for some $x \in \Gamma$ and $n \in \N$, then $c_{x,n-1}(v)=1$ whenever $v \in N_{G_\alpha}^-(i)$. 

The converse implication is also valid. Indeed, if $c_{x,n-1}(v)=1$ for all $v \in N_{G_\alpha}^-(i)$ then $[\MM_\alpha^{n-1}(x)]_k=\max(x)$ for all $k \in \{\alpha_{i,1},\dots,\alpha_{i,d_i}\}$ which yields $[\MM_\alpha^{n}(x)]_i=\max(x)$. Similarly $c_{x,n}(i)=-1$ if and only if $c_{x,n-1}(v)=-1$ for all $v \in N_{G_\alpha}^-(i)$.

Consequently, for every $x \in \Gamma$ and $n \ge 0$ we have $c_{x,n+1}=T_{G_\alpha}(c_{x,n})$, where $T_{G_\alpha}$ is defined by \eq{E:TG}. Thus $c_{x,n}=T_{G_\alpha}^n(c_{x,0})$. 
By Lemma~\ref{lem:1}, $c_x:=c_{x,3^p}$ is a constant function for every $x \in \Gamma$. Now we have three cases.

First, if $c_x \equiv -1$ or $c_x \equiv 1$, then $\MM_\alpha^{3^p}(x)$ is a constant vector and, since $x$ is nonconstant, one has
\Eq{E:Ncx}{
\max \MM_\alpha^{3^p}(x)-\min \MM_\alpha^{3^p}(x)<\max x -\min x.
}
Next, if $c_x \equiv 0$ then $\min(x) < [\MM_\alpha^{3^p}(x)]_i < \max(x)$ for all $i \in \N_p$, which also yields \eq{E:Ncx}. Thus, by the mean-value property, one gets
$$\max \MM_\alpha^{n}(x)-\min \MM_\alpha^{n}(x)<\max x -\min x \quad \text{ for all }x \in \Gamma \text{ and }n\ge 3^p,$$
which implies the uniform weak conctractivity of $\MM_\alpha$.
Finally note that, based on the  proof above, we can easily deduce the moreover part.
\end{proof}

\subsection{Invariance problem}
Following the convention used by several authors (for example, it was used in \cite{Mat09e}) and Theorem~\ref{thm:1} above, we are going to bind several results in a single theorem. The idea beyond this result (and simultaneously the sketch of its proof) is to bind Proposition~\ref{prop:1} (which states that, under certain conditions, $\MM_\alpha$ is uniformly weak contractive) and Theorem~\ref{thm:1} (which provides a number of properties of such mappings). Before we formulate this result, which should be considered as the most important outcome of this paper, let us underline two issues. First, we shuffle the order of items in these results (in our opinion, they are more natural). Second, in some parts we need to give some effort in the proof, since we cannot apply Theorem~\ref{thm:1} directly.

\begin{thm}\label{thm:2}
Let $I \subset \R$ be an interval, $p \in \N$, $\dd \in \N^p$, and $\MM=(M_1,\dots,M_p)$ be a $\dd$-averaging mapping on $I$ such that all $M_i$-s are continuous and strict. Then, for all $\alpha \in \Erg(\dd)$,
\begin{enumerate}[{\rm (a)}]
    \item \label{2.A} there exists the unique $\MM_\alpha$-invariant mean $K_\alpha \colon I^p \to I$; 
    \item \label{2.B} $K_\alpha$ is continuous;
    \item \label{2.C} $K_\alpha$ is strict;
    \item \label{2.D} $(\MM_\alpha^n)_{n=1}^\infty$ converges, uniformly on compact subsets of $I^p$, to the mean-type mapping $\KK_\alpha \colon I^p \to I^p$, $\KK_\alpha=(K_\alpha,\dots,K_\alpha)$;
    \item \label{2.E} $\KK_\alpha \colon I^p \to I^p$ is $\MM_\alpha$-invariant, that is $\KK_\alpha =\KK_\alpha\circ \MM_\alpha$;
    \item \label{2.F} if $M_1,\dots,M_p$ are nondecreasing with respect to each variable, then so is $K_\alpha$;
    \item \label{2.G} if $I=(0,+\infty)$ and $M_1,\dots,M_p$ are positively homogeneous, then every iterate of $\MM_\alpha$ and $K_\alpha$ are positively homogeneous.
\end{enumerate}
\end{thm}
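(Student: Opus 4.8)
The plan is to obtain Theorem~\ref{thm:2} by applying Theorem~\ref{thm:1} to the mean-type mapping $\MM_\alpha$, and then to repair the two items for which Theorem~\ref{thm:1} cannot be quoted verbatim.

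First I would observe that $\MM_\alpha\colon I^p\to I^p$ is a continuous mean-type mapping: by \eq{E:parmean} each of its coordinates $M_i^{(p;\alpha_i)}$ is the composition of the continuous mean $M_i$ with coordinate projections, hence a continuous $p$-variable (extended) mean. Since every $M_i$ is strict and $\alpha\in\Erg(\dd)$, Proposition~\ref{prop:1} applies and shows that $\MM_\alpha$ is uniformly weak contractive. Consequently Theorem~\ref{thm:1} is available for $\MM=\MM_\alpha$, and I would read off four of the seven items at once: part~\eqref{1.B} yields a continuous mean $K_\alpha$ with $\MM_\alpha^n\to\KK_\alpha=(K_\alpha,\dots,K_\alpha)$ uniformly on compact subsets of $I^p$, which is \eqref{2.B} together with \eqref{2.D}; part~\eqref{1.C} says that $\KK_\alpha$, equivalently $K_\alpha$, is $\MM_\alpha$-invariant, which is \eqref{2.E} and the existence half of \eqref{2.A}; and part~\eqref{1.D} gives uniqueness, completing \eqref{2.A}. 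Part~\eqref{1.A} also records that each iterate $\MM_\alpha^n$ is again a mean-type mapping, a fact used tacitly throughout.

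The first item requiring genuine work is the strictness \eqref{2.C}, because part~\eqref{1.E} of Theorem~\ref{thm:1} cannot be invoked: the coordinates $M_i^{(p;\alpha_i)}$ of $\MM_\alpha$ are extended means and are therefore never strict. Instead I would return to the proof of Proposition~\ref{prop:1}. Fix a nonconstant $x\in I^p$. Then the function $c_{x,0}$ introduced there is non-constant, so by the last assertion of Lemma~\ref{lem:1} the function $c_{x,3^p}$ vanishes identically; that is, no coordinate of $\MM_\alpha^{3^p}(x)$ equals $\min(x)$ or $\max(x)$, and since each coordinate lies in $[\min(x),\max(x)]$ by the mean property, every coordinate lies strictly inside $(\min(x),\max(x))$. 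Writing $y:=\MM_\alpha^{3^p}(x)$ and combining the $\MM_\alpha$-invariance of $K_\alpha$ with \eq{E:MP},
\Eq{*}{
\min(x)<\min(y)\le K_\alpha(y)=K_\alpha(x)\le\max(y)<\max(x),
}
which is exactly the strictness asserted in \eqref{2.C}.

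Finally, \eqref{2.F} and \eqref{2.G} reduce to Theorem~\ref{thm:1} once the relevant property is transferred from the $M_i$-s to the coordinates of $\MM_\alpha$. If each $M_i$ is nondecreasing in each variable then, since $M_i^{(p;\alpha_i)}(x)=M_i(x_{\alpha_{i,1}},\dots,x_{\alpha_{i,d_i}})$, each $M_i^{(p;\alpha_i)}$ is nondecreasing in each of its $p$ variables (constant, hence trivially nondecreasing, in the variables whose index does not occur among $\alpha_{i,1},\dots,\alpha_{i,d_i}$), so part~\eqref{1.F} gives \eqref{2.F}. If $I=(0,+\infty)$ and each $M_i$ is positively homogeneous then, for $c>0$, $M_i^{(p;\alpha_i)}(cx)=M_i(cx_{\alpha_{i,1}},\dots,cx_{\alpha_{i,d_i}})=c\,M_i^{(p;\alpha_i)}(x)$, hence $\MM_\alpha$ is positively homogeneous and part~\eqref{1.G} gives \eqref{2.G}. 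I expect \eqref{2.C} to be the only non-routine step — the one point at which the extended-mean structure forces a return to the combinatorics of Lemma~\ref{lem:1} and the proof of Proposition~\ref{prop:1} rather than a black-box application of Theorem~\ref{thm:1} — while everything else is bookkeeping built on Proposition~\ref{prop:1} and Theorem~\ref{thm:1}.
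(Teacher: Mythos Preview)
Your proposal is correct, and for parts \eqref{2.A}, \eqref{2.B}, \eqref{2.D}, \eqref{2.E}, \eqref{2.F}, \eqref{2.G} it follows the paper almost verbatim: apply Proposition~\ref{prop:1} to get uniform weak contractivity, then read off Theorem~\ref{thm:1}, transferring monotonicity and homogeneity from the $M_i$-s to the $M_i^{(p;\alpha_i)}$-s just as you do.

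The one genuine difference is your argument for \eqref{2.C}. The paper uses the \emph{moreover} clause of Proposition~\ref{prop:1} as stated --- the dichotomy ``either $\MM_\alpha^{3^p}(x)$ is a constant vector or \eq{prop:1E} holds'' --- and must then treat the constant case separately: it picks the smallest $n_0$ with $\MM_\alpha^{n_0}(x)$ constant, sets $y=\MM_\alpha^{n_0-1}(x)$, and uses irreducibility of $G_\alpha$ together with strictness of some $M_i$ to push $K_\alpha(x)$ strictly below $\max(y)\le\max(x)$. You instead re-enter the proof of Proposition~\ref{prop:1} and invoke the last line of Lemma~\ref{lem:1} directly: since $c_{x,0}$ assumes both values $1$ and $-1$ for nonconstant $x$, it is not identically $\pm 1$, so $\bar c=0$ and $c_{x,3^p}\equiv 0$, which forces every coordinate of $\MM_\alpha^{3^p}(x)$ into the open interval $(\min(x),\max(x))$ --- regardless of whether $\MM_\alpha^{3^p}(x)$ is constant. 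This collapses the paper's case split and is shorter; the trade-off is that you must unpack the internals of Proposition~\ref{prop:1} (the identification $c_{x,n}=T_{G_\alpha}^n(c_{x,0})$) rather than quoting its statement as a black box. One small wording point: the coordinates $M_i^{(p;\alpha_i)}$ need not literally be extended means (e.g.\ when $d_i=p$ and $\alpha_i$ is a permutation), but your real claim --- that they are not in general strict, so Theorem~\ref{thm:1}\eqref{1.E} is unavailable --- is correct.
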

\begin{proof}
By Proposition~\ref{prop:1} we know that $\MM_\alpha$ is uniformly weakly contractive. Then the vast majority of the proof is based on Theorem~\ref{thm:1}; more precisely: 
\eqref{1.D} yields \eqref{2.A}, \eqref{1.B} implies \eqref{2.B} and \eqref{2.D}, and \eqref{1.C} implies \eqref{2.E}. 

Moreover, for all $i \in \N_p$, if $M_i$ is nondecreasing then the mapping \Eq{*}{I^p \ni (x_1,\dots,x_p) \mapsto M_i(x_{\alpha_{i,1}},\dots,x_{\alpha_{i,d_i}})} is also nondecreasing (in each of its variable), and therefore $[\MM_\alpha]_i$ is nondecreasing. Since $i \in \N_p$ is arbitrary, in view of Theorem~\ref{thm:1} part \eqref{1.F}, we get \eqref{2.F}.  Analogously, using part \eqref{1.G} of the same result, one can prove \eqref{2.G}.

At this stage \eqref{2.C} is the only remaining part to be proved, whence we need to prove that $K_\alpha$ is a strict mean on $I$. To this end, take a nonconstant vector $x \in I^p$. We show that $K_\alpha(x)<\max(x)$ (the proof of the second inequality is analogous).

Due to the moreover part of Proposition~\ref{prop:1} we have that either  \eq{prop:1E} holds or $\MM_\alpha^{3^p}(x)$ is a constant vector.  In the first case, since $K_\alpha$ is \mbox{$\MM_\alpha$-invariant}, using the inequality $K_\alpha=K_\alpha\circ \MM_\alpha^{3^p}$ and the mean-value property of $K_\alpha$, we obtain 
\Eq{*}{
K_\alpha(x)=K_\alpha \circ \MM_\alpha^{3^p} (x)\le\max \big(\MM_\alpha^{3^p} (x)\big) < \max(x).
}

If $\MM_\alpha^{3^p}(x)$ is a constant vector (that is $\MM_\alpha^{3^p}(x)=\KK_\alpha(x)$) then let $n_0\in\{0,\dots,3^p\}$ be the smallest number such that $\MM_\alpha^{n_0}(x)=\KK_\alpha(x)$.

Obviously $n_0>0$ since $x$ is nonconstant. Moreover, $y:=\MM_\alpha^{n_0-1}(x)$ is a nonconstant vector with $\KK_\alpha(x)=\MM_\alpha(y)$. 

Then $y_k=\min(y)$ for some $k \in\N_p$. Since $G_\alpha$ is irreducible, we have $(k,i) \in E_\alpha$ for some $i \in \N_p$. 
Therefore, by the definition of $G_\alpha$, one gets $\alpha_{i,j}=k$ for some $j \in \N_{d_i}$. 
Then $\max(y_{\alpha_{i,1}},\dots,y_{\alpha_{i,d_i}}) \le \max(y)$ and $y_{\alpha_{i,j}}=y_k=\min(y)<\max(y)$. Therefore, since $M_i$ is strict,
\Eq{*}{
K_\alpha(x)=[\KK_\alpha(x)]_i=[\MM_\alpha(y)]_i=M_i(y_{\alpha_{i,1}},\dots,y_{\alpha_{i,d_i}})<\max(y).
}
However, since $\MM_\alpha$ is a mean-type mapping, we obtain \Eq{*}{
\max(y)=\max(\MM_\alpha^{n_0-1}(x))\le \max(x),} and thus we get $K_\alpha(x)<\max(x)$.

Similarly, one can prove the inequality $K_\alpha(x)>\min(x)$. Since $x$ is an arbitrary nonconstant vector in $I^p$, we obtain that $K_\alpha$ is a strict mean, which was the last unproved part of this statement.
\end{proof}

\section{Applications and examples}
\subsection{An application to functional equations}
As a first application, we solve the functional equation $F \circ \MM_\alpha=F$. Obviously, under standard conditions, it has a unique solution in the family of means. We show that if we extend the considered family to all functions which are continuous on the
diagonal, we are able to follow the pattern of invariant means. 

\begin{thm}
Let $I \subset \R$ be an interval, $p \in \N$, $\dd \in \N^p$, $\alpha \in \Erg(\dd)$, and $\MM=(M_1,\dots,M_p)$ be a $\dd$-averaging mapping on $I$ such that all $M_i$-s are strict. 

A function $F\colon I^p \to \R$ that is continuous on the
diagonal $\Delta(I^p):=\{(u_1,\dots,u_p)\in I^p \colon u_1=\dots=u_p\}$ is invariant with respect to the
mean-type mapping $\MM_\alpha$, i.e. $F$ satisfies the functional equation 
\Eq{*}{
F\circ \MM_\alpha=F
}
if, and only if, there is a continuous function $\varphi \colon I \to \R$ such that $F=\varphi \circ K_\alpha$, where $K_\alpha \colon I^p \to I$ it the unique $\MM_\alpha$-invariant mean.
\end{thm}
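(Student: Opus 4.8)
The plan is to prove both implications, with the "if" direction being routine and the "only if" direction carrying the real content. For the easy direction, suppose $F = \varphi \circ K_\alpha$ for a continuous $\varphi \colon I \to \R$. Then, since $K_\alpha$ is $\MM_\alpha$-invariant by Theorem~\ref{thm:2}\eqref{2.A}, we immediately get $F \circ \MM_\alpha = \varphi \circ K_\alpha \circ \MM_\alpha = \varphi \circ K_\alpha = F$. No continuity of $\varphi$ is even needed here; it is needed only to make this a genuine characterization and to match the converse.

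For the "only if" direction, assume $F$ is continuous on the diagonal $\Delta(I^p)$ and satisfies $F \circ \MM_\alpha = F$. By induction $F \circ \MM_\alpha^n = F$ for every $n \in \N$. The key idea is to pass to the limit. By Theorem~\ref{thm:2}\eqref{2.D}, $\MM_\alpha^n \to \KK_\alpha = (K_\alpha,\dots,K_\alpha)$ pointwise (indeed locally uniformly) on $I^p$; in particular, for each fixed $x \in I^p$, $\MM_\alpha^n(x)$ converges to the diagonal point $(K_\alpha(x),\dots,K_\alpha(x))$. Since $F$ is assumed continuous \emph{on} $\Delta(I^p)$ and the limit point lies on $\Delta(I^p)$, one gets
\Eq{*}{
F(x) = \lim_{n\to\infty} F\big(\MM_\alpha^n(x)\big) = F\big(K_\alpha(x),\dots,K_\alpha(x)\big).
}
Here the first equality is just $F\circ\MM_\alpha^n = F$, and the second uses continuity of $F$ at the diagonal point together with $\MM_\alpha^n(x) \to (K_\alpha(x),\dots,K_\alpha(x))$. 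Now define $\varphi \colon I \to \R$ by $\varphi(t) := F(t,t,\dots,t)$, i.e.\ $\varphi$ is the restriction of $F$ to $\Delta(I^p)$ (identified with $I$). Then $\varphi$ is continuous because $F$ is continuous on $\Delta(I^p)$, and the displayed identity reads precisely $F(x) = \varphi(K_\alpha(x))$, that is, $F = \varphi \circ K_\alpha$. This gives the desired representation.

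The main obstacle, and the only subtlety, is the interchange of $F$ with the limit: this requires knowing that $F$ is continuous \emph{at the limiting point}, which is exactly a point of $\Delta(I^p)$ — hence the hypothesis "continuous on the diagonal" is used in an essential way, and nothing more is needed. One should also double-check that $K_\alpha(x) \in I$ so that $\varphi(K_\alpha(x))$ is well-defined, but this is immediate from the mean property of $K_\alpha$ (Theorem~\ref{thm:2}\eqref{2.A}). Finally, uniqueness of the $\MM_\alpha$-invariant mean $K_\alpha$ (again Theorem~\ref{thm:2}\eqref{2.A}, via Proposition~\ref{prop:1} and Theorem~\ref{thm:1}) is what makes the statement well-posed, i.e.\ guarantees that "the unique $\MM_\alpha$-invariant mean $K_\alpha$" in the statement refers to a genuinely determined object; no further work is required for that.
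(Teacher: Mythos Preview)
Your proof is correct and follows essentially the same approach as the paper's own proof: iterate the invariance to get $F\circ\MM_\alpha^n=F$, pass to the limit using Theorem~\ref{thm:2}\eqref{2.D} and continuity of $F$ at diagonal points to obtain $F=F\circ\KK_\alpha$, then set $\varphi(t):=F(t,\dots,t)$; the converse is handled identically via $K_\alpha\circ\MM_\alpha=K_\alpha$. Your write-up is in fact more careful than the paper's in isolating where continuity on the diagonal is used and why $\varphi$ is continuous.
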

\begin{proof}
Take an $\MM_\alpha$-invariant function $F \colon I^p \to \R$ that is continuous on the diagonal.
Then, for all $n \in \N$, we have $F \circ \MM_\alpha^n=F$. In the limit case, by Theorem~\ref{thm:2} part (d), since $F$ is continuous on the diagonal, we get $F=F \circ \KK_\alpha$. Thus $F=\varphi\circ K_\alpha$ for $\varphi(x):=F(x,\dots,x)$.

Conversely, if $F=\varphi \circ K_\alpha$ then, since $K_\alpha$ in $\MM_\alpha$-invariant, for all $x \in I^p$ we get 
\Eq{*}{
F \circ\MM_\alpha(x)=\varphi \circ K_\alpha \circ \MM_\alpha(x)=\varphi \circ K_\alpha(x)=F(x),}
which completes the proof
\end{proof}

\subsection{Classical application of Theorem~\ref{thm:2}} 
In this section we apply Theorem~\ref{thm:2} to show that the mean-type mapping given by \eq{E:ex1} has the unique invariant mean. This was one of the motivations to write this paper. In this and the subsequent section, all mean-type mappings contain only power means (since the means that build the mean-type mapping $\MM$ do not affect the convergence of the sequence of iterates $(\MM_\alpha^n)$ provided that they are all strict). Recall that the $n$-variable power mean of order $s$ is defined by
\Eq{*}{
\P_s(x_1,\dots,x_n)=\begin{cases}
\Big(\dfrac{x_1^s+\cdots+x_n^s}{n}\Big)^{1/s} &\quad \text{ if } s\in\R\setminus\{0\}, \\[2mm]                                              
\sqrt[n]{x_1\cdots x_n} &\quad \text{ if } s = 0,
\end{cases}
}
where $n \in \N$ and $x_1,\dots,x_n \in \R_+$. For simplicity, let us assume that all means in this section are on $\R_+$.

\begin{exa}\label{exa:ergodic}
Let $\MM \colon \R_+^4 \to \R_+^4$ be given by
\eq{E:ex1}. We show that there exists a unique $\MM$-invariant mean $K \colon \R_+^4 \to \R_+$. Additionally, $K$ is continuous and strict.
\begin{figure}[h] 
\centering
\captionsetup{justification=centering}
\begin{tikzpicture}[>={Stealth[width=6pt,length=9pt]},
node distance=25mm,auto]
\node[state,inner sep=3pt,minimum size=1pt] (b) at (1,1) {$2$};
\node[state,inner sep=3pt,minimum size=1pt] (c) at (0,0) {$3$};
\node[state,inner sep=3pt,minimum size=1pt] (d) at (1,-1) {$4$};
\node[state,inner sep=3pt,minimum size=1pt] (a) at (2,0) {$1$};
\path[->] (a) edge [loop right] node {} (a);
\path[->] (b) edge [loop above] node {} (b);
\path[->] (d) edge [loop below] node {} (d);
\path[->] (c) edge [loop left] node {} (c);
\path[->] (a) edge [bend left=35] node {} (d);
\path[->] (b) edge [bend left=35] node {} (a);
\path[->] (c) edge [bend left=35] node {} (b);
\path[->] (d) edge [bend left=35] node {} (c);
\end{tikzpicture}
\caption{Graph $G_\alpha$ related to Example~\ref{exa:ergodic}.}
\end{figure}

Indeed, in the framework of $\dd$-averaging mappings, we express $\MM$ defined in \eq{E:ex1} as $\bar\MM_\alpha$, where $\bar\MM$ consists of bivariate power means, that is 
\Eq{*}{
\bar\MM=(\P_{-1},\P_0,\P_1,\P_2),\text{ and }\alpha=\big((1,2),(2,3),(3,4),(4,1)\big).
}
The vector $\dd$ contains the lengths of the elements in $\alpha$ (since $\alpha \in \N_4^\dd$), thus $\dd=(2,2,2,2)$.
Obviously all means in $\bar\MM$, being power means, are continuous an strict. 
Moreover, the $\alpha$-incidence graph is aperiodic (since every vertex has a loop) and irreducible (since $(4321)$ is its Hamiltonian cycle). Consequently the $\alpha$-incidence graph is ergodic. 

Thus, in view of Theorem~\ref{thm:2}, there exists exactly one $\MM$-invariant mean $K \colon \R_+^4 \to \R_+$. Moreover, by the same theorem, we know that it is continuous and strict.
\end{exa}

\subsection{Mean-type mappings without ergodic incidence graph}

In the last section, we show a few difficulties that arise in this setting. 
Moreover, in each example we present the incidence graph, which would help us to understand the problems appearing when it comes to deal with the invariance problem.

In the first example we show what happens if the incidence graph is disconnected.

\begin{exa}[Disconnected incidence graph]\label{exa:dig}
Let $p=4$, 
\Eq{*}{
\dd&=(2,2,2,2), \\
\alpha&=\big((1,2),(1,2),(3,4),(3,4)\big) \in \N_4^\dd,\\
\MM&=(\P_{-1},\P_1,\P_{-1},\P_1).
}
Then the mean-type mapping $\MM_\alpha \colon \R_+^4\to \R_+^4$ is of the form
\Eq{*}{
\MM_\alpha(x,y,z,t)&=\bigg(\frac{2xy}{x+y},\frac{x+y}2,\frac{2zt}{z+t},\frac{z+t}2\:\bigg).
}

\begin{figure}[h]
\centering
\captionsetup{justification=centering}
\begin{tikzpicture}[>={Stealth[width=6pt,length=9pt]},
node distance=15mm,auto]
\node[state,inner sep=3pt,minimum size=1pt] (b) at (0,0) {$2$};
\node[state,inner sep=3pt,minimum size=1pt] (c) at (2,0) {$3$};
\node[state,inner sep=3pt,minimum size=1pt] (d) at (2,2) {$4$};
\node[state,inner sep=3pt,minimum size=1pt] (a) at (0,2) {$1$};
\path[->] (a) edge [bend left] node {} (b);
\path[->] (a) edge [loop left] node {} (a);
\path[->] (b) edge [bend left] node {} (a);
\path[->] (b) edge [loop left] node {} (b);
\path[->] (d) edge [bend left] node {} (c);
\path[->] (c) edge [bend left] node {} (d);
\path[->] (c) edge [loop right] node {} (c);
\path[->] (d) edge [loop right] node {} (d);
\end{tikzpicture}
\caption{Graph $G_\alpha$ related to Example~\ref{exa:dig}.}
\end{figure}
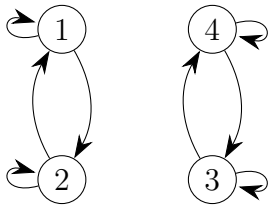

Observe that, in this case, $\MM_\alpha$ is not weakly contractive, for example 
\Eq{*}{
\MM_\alpha^n(1,1,2,2)=(1,1,2,2)\text{ for all }n\in\N.
}
As a matter of fact, we can split the mapping $\MM_\alpha$ into two bivariate mappings. Then, using the classical result, which says that the arithmetic-harmonic mean coincides with the geometric mean, we obtain
\Eq{*}{
\lim_{n\to\infty}\MM^n_\alpha(x,y,z,t)&=(\sqrt{xy},\sqrt{xy},\sqrt{zt},\sqrt{zt}).
}
As a result we have two natural $\MM_\alpha$-invariant means. Namely
\Eq{*}{
K_1(x,y,z,t)=\sqrt{xy}\text{ and }K_2(x,y,z,t)=\sqrt{zt}.
}
Therefore $K(x,y,z,t)=S(\sqrt{xy},\sqrt{zt})$ is $\MM_\alpha$-invariant for every mean $S \colon \R_+^2 \to \R_+$. Note that we cannot exclude that there are other \mbox{$\MM_\alpha$-invariant} means, however all continuous solutions are of this form.

\end{exa}
In next two examples we deal with the weakly connected graphs which are not irreducible. We do believe that it is possible to generalize these examples to a result which covers weakly connected graphs. However, at this stage, it is a conjecture.

\begin{exa}[Weakly connected incidence graph I]\label{exa:Wcig1}
Let $p=4$,
\Eq{*}{
\dd&=(3,3,3,3),\\
\alpha&=\big((1,2,3),(1,2,3),(1,2,3),(1,2,3)\big)\in \N_4^\dd, \\
\MM&=(\P_{-1},\P_0,\P_1,\P_2).
}
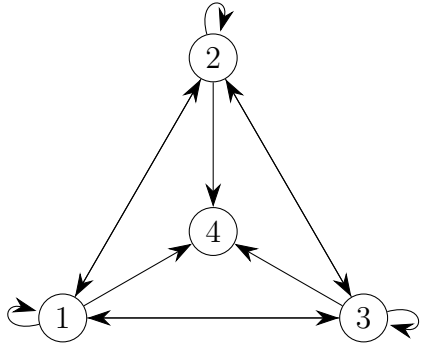
\begin{figure}[h]
\centering
\captionsetup{justification=centering}
\begin{tikzpicture}[>={Stealth[width=6pt,length=9pt]},
node distance=15mm,auto]
\node[state,inner sep=3pt,minimum size=1pt] (b) at (2,3.46) {$2$};
\node[state,inner sep=3pt,minimum size=1pt] (c) at (4,0) {$3$};
\node[state,inner sep=3pt,minimum size=1pt] (d) at (2,1.15) {$4$};
\node[state,inner sep=3pt,minimum size=1pt] (a) at (0,0) {$1$};
\path[->] (a) edge [loop left] node {} (a);
\path[->] (b) edge [loop above] node {} (b);
\path[->] (c) edge [loop right] node {} (c);
\path[->] (a) edge  node {} (b);
\path[->] (c) edge node {} (a);
\path[->] (c) edge  node {} (b);
\path[->] (a) edge node {} (c);
\path[->] (b) edge  node {} (a);
\path[->] (b) edge  node {} (c);
\path[->] (a) edge  node {} (d);
\path[->] (b) edge node {} (d);
\path[->] (c) edge node {} (d);
\end{tikzpicture}
\caption{Graph $G_\alpha$ related to Example~\ref{exa:Wcig1}.}
\end{figure}

Then the mean-type mapping $\MM_\alpha \colon \R_+^4\to \R_+^4$ is of the form
\Eq{*}{
\MM_\alpha(x,y,z,t)=\bigg(\frac{3xyz}{xy+yz+zx},\sqrt[3]{xyz}, 
\frac{x+y+z}3,\sqrt{\frac{x^2+y^2+z^2}3}\:\bigg).
}
Thus $\MM_\alpha$ does not depend on the last coordinate. As a result, we cannot claim that the $\MM_\alpha$-invariant mean is, for example, monotone or strict. In this case, however, the $\MM_\alpha$-invariant mean is uniquely determined, since $\MM_\alpha$ restricted to the first three variables (in both domain and values) admit the unique invariant mean.
\end{exa}

\begin{exa}[Weakly connected incidence graph II]\label{exa:Wcig2}
Let $p=4$,
\Eq{*}{
\dd&=(2,2,2,2),\\
\alpha&=\big((1,2),(1,2),(2,4),(3,4)\big)\in \N_4^\dd, \\
\MM&=(\P_{-1},\P_1,\P_{-1},\P_1).
}
Then $\MM_\alpha$ is of the form
\Eq{*}{
\MM_\alpha(x,y,z,t)&=\bigg(\frac{2xy}{x+y},\frac{x+y}2,\frac{2yt}{y+t},\frac{z+t}2\:\bigg).
}

\begin{figure}[h]
\centering
\captionsetup{justification=centering}
\begin{tikzpicture}[>={Stealth[width=6pt,length=9pt]},
node distance=15mm,auto]
\node[state,inner sep=3pt,minimum size=1pt] (b) at (0,0) {$2$};
\node[state,inner sep=3pt,minimum size=1pt] (c) at (2,0) {$3$};
\node[state,inner sep=3pt,minimum size=1pt] (d) at (2,2) {$4$};
\node[state,inner sep=3pt,minimum size=1pt] (a) at (0,2) {$1$};
\path[->] (a) edge [bend left] node {} (b);
\path[->] (a) edge [loop left] node {} (a);
\path[->] (b) edge [bend left] node {} (a);
\path[->] (b) edge [loop left] node {} (b);
\path[->] (b) edge node {} (c);
\path[->] (c) edge [bend left] node {} (d);
\path[->] (d) edge [bend left] node {} (c);
\path[->] (d) edge [loop right] node {} (d);
\end{tikzpicture}
\caption{Graph $G_\alpha$ related to Example~\ref{exa:Wcig2}.}
\end{figure}
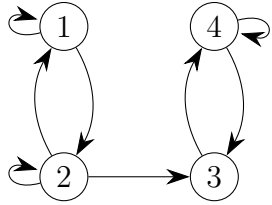

Analogously to Example~\ref{exa:dig}, we have
\Eq{*}{
\lim_{n \to \infty} \MM_\alpha^n(x,y,z,t)=(\sqrt{xy},\sqrt{xy},\sqrt{xy},\sqrt{xy}).
}
Therefore $K(x,y,z,t)=\sqrt{xy}$ is the only $\MM$-invariant mean (see \cite[Theorem~1]{MatPas21} for the detailed proof). Remarkably,
it depends on neither $z$ nor $t$.
\end{exa}

Finally, we show an example with periodic incidence graph, with the conjecture as in the case of weakly connected graphs.

\begin{exa}[Periodic incidence graph] \label{exa:pig}
Let $p=4$,
\Eq{*}{
\dd&=(2,2,2,2),\\
\alpha&=\big((3,4),(3,4),(1,2),(1,2)\big)\in \N_4^\dd, \\
\MM&=(\P_{-1},\P_1,\P_{-1},\P_1).
}
Then $\MM_\alpha$ is of the form
\Eq{*}{
\MM_\alpha(x,y,z,t)&=\bigg(\frac{2zt}{z+t},\frac{z+t}2,\frac{2xy}{x+y},\frac{x+y}2\:\bigg).
}
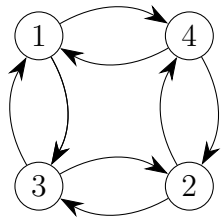
\begin{figure}[h]
\centering
\captionsetup{justification=centering}
\begin{tikzpicture}[>={Stealth[width=6pt,length=9pt]},
node distance=15mm,auto]
\node[state,inner sep=3pt,minimum size=1pt] (b) at (2,0) {$2$};
\node[state,inner sep=3pt,minimum size=1pt] (c) at (0,0) {$3$};
\node[state,inner sep=3pt,minimum size=1pt] (d) at (2,2) {$4$};
\node[state,inner sep=3pt,minimum size=1pt] (a) at (0,2) {$1$};
\path[->] (a) edge [bend left] node {} (c);
\path[->] (b) edge [bend left] node {} (d);
\path[->] (c) edge [bend left] node {} (a);
\path[->] (d) edge [bend left] node {} (b);
\path[->] (a) edge [bend left] node {} (d);
\path[->] (d) edge [bend left] node {} (a);
\path[->] (a) edge [bend left] node {} (c);
\path[->] (b) edge [bend left] node {} (c);
\path[->] (c) edge [bend left] node {} (b);
\end{tikzpicture}
\caption{Graph $G_\alpha$ related to Example~\ref{exa:pig}.}
\end{figure}

Then, after some computations, we have
\Eq{*}{
\MM_\alpha^2(x,y,z,t)&=\bigg(\frac{4xy(x+y)}{x^2+6xy+y^2},\frac{x^2+6xy+y^2}{4(x+y)},\\
&\qquad\qquad \frac{4zt(z+t)}{z^2+6zt+t^2},\frac{z^2+6zt+t^2}{4(z+t)}\:\bigg).
}
Analogously to Example~\ref{exa:dig} and the previous example, we have
\Eq{*}{
\lim_{n\to \infty} \MM_\alpha^{2n}(x,y,z,t)=(\sqrt{xy},\sqrt{xy},\sqrt{zt},\sqrt{zt}),
}
consequently
\Eq{*}{
\lim_{n\to \infty} \MM_\alpha^{2n+1}(x,y,z,t)&=\MM_\alpha \Big(\lim_{n\to \infty} \MM_\alpha^{2n}(x,y,z,t)\Big)\\
&= (\sqrt{zt},\sqrt{zt},\sqrt{xy},\sqrt{xy}).
}
Therefore $K(x,y,z,t)=S(\sqrt{xy},\sqrt{zt})$ is $\MM_\alpha$-invariant for every symmetric mean $S \colon \R_+^2 \to \R_+$. Moreover, using ideas from Example~\ref{exa:dig}, one can show that all continuous $\MM_\alpha$-invariant means are of this form.
\end{exa}

\subsection*{Funding}
The authors declare that no funds, grants, or other support were received during the preparation of this manuscript.
\subsection*{Competing Interests}
The authors have no relevant financial or non-financial interests to disclose.
\subsection*{Data availability}
Data sharing not applicable to this article as no datasets were generated or analysed during the current study.


\end{document}